\theoremstyle{plain}
\newtheorem{thm}{Theorem}[section]
\newtheorem{lem}[thm]{Lemma}
\newtheorem{prop}[thm]{Proposition}
\newtheorem{defn}[thm]{Definition}
\newtheorem{example}[thm]{Example}
\newtheorem{rem}[thm]{Remark}
\newcommand{\Hom}{\mathop{\mathrm{Hom}}\nolimits}
\DeclareMathOperator{\GL}{GL}
\DeclareMathOperator{\SO}{SO}
\DeclareMathOperator{\SL}{SL}
\DeclareMathOperator{\PSL}{PSL}
\DeclareMathOperator{\cd}{cd}
\DeclareMathOperator{\tr}{tr}
\newcommand{\Z}{\mathbb Z}
\newcommand{\R}{\mathbb R}
\title{Deformations of the discrete Heisenberg group}
\author{Severin {\sc Barmeier}}
\address{
Instituto de Matem\'atica, Estat\'istica e Computa\c{c}\~ao Cient\'ifica, Universidade Estadual de Campinas\\Rua S\'ergio Buarque de Holanda, 651, Cidade Universit\'aria ``Zeferino Vaz'', Campinas, SP, Brazil}
\begin{document}
\maketitle

\begin{abstract}
We study deformations of the discrete Heisenberg group acting properly discontinuously on the Heisenberg group from the left and right and obtain a complete description of the deformation space.
\end{abstract}

\section{Introduction and statement of main result.}
We will be interested in deformations of the discrete Heisenberg group as a group acting properly discontinuously and cocompactly on a space $X$. The following defines our notion of {\em deformation}.
\begin{defn}[\cite{kobayashi93,kobayashi01,kobayashinasrin06}]
Let $G$ be a Lie group acting continuously on a locally compact space $X$ and let $\Gamma\subset G$ be a discrete subgroup. Define the {\em parameter space} of deformations of $\Gamma$ within $G$, acting properly discontinuously on the space $X$ as
\[
R(\Gamma,G;X)=\left\{\phi\colon\Gamma\to G\middle|
\begin{array}{l}
\phi\text{ is injective},\\
\phi(\Gamma)\text{ acts properly}\\\text{discontinuously}\\
\text{and freely on $X$}
\end{array}\right\}
\]
and the {\em deformation space} as
\[
\mathcal T(\Gamma,G;X)=R(\Gamma,G;X)/G,
\]
where $G$ acts on $R(\Gamma,G;X)$ by conjugation, so that $\mathcal T(\Gamma,G;X)$ is the space of non-trivial deformations.
\end{defn}

There is a natural topology on the parameter space $R(\Gamma,G;X)$ as a subset of $\Hom(\Gamma,G)$ endowed with the compact open topology. Then we consider the quotient topology on the deformation space $\mathcal T(\Gamma,G;X)$ (\cite{kobayashi93,kobayashi01}).

If $X$ is an irreducible Riemannian symmetric space $G/K$, Selberg--Weil rigidity (\cite{weil}) states that $\mathcal T=\mathcal T(\Gamma,G;G/K)$ is discrete if and only if $G$ is not locally isomorphic to $\SL_2\R$. An example of the failure of rigidity is when $G=\PSL_2\R$, $\Gamma$ is the fundamental group of a Riemann surface of genus $g\geqslant 2$ and $X=\SL_2\R/\SO_2$ is the Poincar\'e disk. Then $\mathcal T$ is the Teichm\"uller space, which has dimension $6g-6$.

The study of deformations of discontinuous groups for non-Riemannian homogeneous spaces and the failure of rigidity was initiated by Kobayashi \cite{kobayashi93}; Kobayashi \cite{kobayashi98} treats the case when $G$ is semi-simple. A complete description of the parameter and deformation spaces was first given for $\Gamma=\Z^{k}$ acting on $X=\R^{k+1}$ via some nilpotent group of transformations $G$ in \cite{kobayashinasrin06} and these results were extended to the case where $G$ is the Heisenberg group, $H$ is any connected Lie subgroup and $\Gamma$ is a subgroup acting properly discontinuously and freely on $X=G/H$, in \cite{bakloutikedimyoshino08}.

In this paper, we give a concrete description of the space $R(\Gamma,G\times G;G)$, where $G$ is the Heisenberg group, $\Gamma=G\cap\GL_3\Z$ is the discrete Heisenberg group and the direct product group $G\times G$ acts on the group manifold $G$ from the left and right. Our main result is the following.

\begin{thm}
For the deformation space $\mathcal T(\Gamma,G\times G;G)$ of the discrete Heisenberg group acting properly discontinuously on the group manifold $G$ from the left and right, we have the homeomorphism
\[
\mathcal T(\Gamma,G\times G;G)\cong\GL_2\R\times\R^\times\times\R^3.
\]
\end{thm}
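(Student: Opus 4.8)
The plan is to reduce everything to linear-algebraic data attached to the two components $\phi_L,\phi_R\colon\Gamma\to G$ of a homomorphism $\phi=(\phi_L,\phi_R)\colon\Gamma\to G\times G$, and then to carry out the conjugation quotient explicitly. Since $\Gamma$ is the free $2$-step nilpotent group on the generators $a=g(1,0,0)$ and $b=g(0,1,0)$ (with $c=[a,b]$ generating the centre), and $G$ is itself $2$-step nilpotent, a homomorphism into $G$ is freely determined by the images $\phi_L(a),\phi_L(b)\in G$, with $\phi_L(c)=[\phi_L(a),\phi_L(b)]$ automatically central; hence $\Hom(\Gamma,G)\cong G\times G\cong\R^6$ and $\Hom(\Gamma,G\times G)\cong\R^{12}$. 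First I would record the induced maps on the abelianisation $G/Z(G)\cong\R^2$, obtaining matrices $\alpha,\beta\in M_2(\R)$ whose columns are the projections of the images of $a$ and $b$, together with the four central coordinates of $\phi_L(a),\phi_L(b),\phi_R(a),\phi_R(b)$; here the central part of $\phi_L(c)$ is $\det\alpha$ and that of $\phi_R(c)$ is $\det\beta$, up to sign.

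Next I would translate proper discontinuity and freeness into conditions on this data by exploiting the fibration $G\to G/Z(G)\cong\R^2$ with one-dimensional central fibre. On the base, $\phi(\gamma)$ acts by translation by $(\alpha-\beta)\bar\gamma$, so the induced $\Gamma$-action on $\R^2$ is through the subgroup $(\alpha-\beta)\Z^2$; this is properly discontinuous and cocompact exactly when $\alpha-\beta\in\GL_2\R$. Granting this, the only elements acting trivially on the base are the powers of $c$, and $c^k$ acts on the central fibre by translation by $k(\det\alpha-\det\beta)$, so freeness and properness in the fibre direction force $\det\alpha\neq\det\beta$. I expect the two open conditions $\alpha-\beta\in\GL_2\R$ and $\det\alpha-\det\beta\in\R^\times$ to cut out $R(\Gamma,G\times G;G)$ as an open subset of $\R^{12}$, with injectivity of $\phi$ following as a consequence.

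Then I would analyse the conjugation action of $G\times G$. Its centre $Z(G)\times Z(G)$ acts trivially, so the effective action is by $(G/Z(G))^2\cong\R^4$; it fixes $\alpha$ and $\beta$ and translates the four central coordinates through two linear maps $\R^2\to\R^2$ built from the columns of $\alpha$ and of $\beta$ via the symplectic form. The invariant $\alpha-\beta\in\GL_2\R$ furnishes the first factor, the scalar $\det\alpha-\det\beta\in\R^\times$ the second, and what survives of the central coordinates after the $\R^4$-quotient, combined with one further function of the entries of $\alpha$, should assemble into the $\R^3$-factor. Finally I would exhibit explicit maps in both directions between the quotient and $\GL_2\R\times\R^\times\times\R^3$ and verify that they are mutually inverse and continuous for the quotient topology.

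The main obstacle is precisely this conjugation quotient on the central coordinates. The subgroup by which one divides has rank $\rank\alpha+\rank\beta$, which drops along the loci $\det\alpha=0$ and $\det\beta=0$; there extra central moduli survive, the naive slice "normalise the central coordinates to zero" degenerates, and the orbit dimension jumps. The delicate task is therefore to select invariants that remain regular across these degeneracy loci and to show that the quotient is nonetheless the asserted (Hausdorff) manifold, i.e.\ to produce a genuine global homeomorphism rather than a merely stratum-by-stratum identification. This is where I expect the real difficulty to lie; the parametrisation of homomorphisms and the reduction of properness to $\alpha-\beta\in\GL_2\R$ and $\det\alpha\neq\det\beta$ should by contrast be routine.
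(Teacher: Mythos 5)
Your reduction to the linear data $(\alpha,\beta,c_1,c_2,c_1',c_2')$ and the two open conditions $\alpha-\beta\in\GL_2\R$, $\det\alpha\neq\det\beta$ matches Prop.~\ref{mainresult} exactly (there $A,A'$ play the role of your $\alpha,\beta$). Your route to these conditions --- projecting the left--right action to $G/Z(G)\cong\R^2$ and then to the central fibre --- is more hands-on than the paper's, which forms the connected extension $L_{\rho,\rho'}=\{(\overline\rho(g),\overline{\rho'}(g))\}$ of $\Gamma_{\rho,\rho'}$ and invokes Nasrin's criterion (Thm.~\ref{thmnasrin}) together with Lem.~\ref{properproperlydiscontinuous}; your base-and-fibre argument still owes the sufficiency direction (why properness on base and fibre separately yields properness of the whole action), which is exactly what Nasrin's theorem packages, though it does give cocompactness for free where the paper uses a cohomological-dimension argument. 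You also leave vague where the $\R^\times\times\R^3$ factor comes from; the paper's device is the change of variables $(U,V)=(A-A',(A-A')^{-1}(A+A'))$, under which condition (b) becomes $\tr V\neq0$, so the admissible $(A,A')$ form $\GL_2\R\times\{V:\tr V\neq0\}\cong\GL_2\R\times\R^\times\times\R^3$ and the entire $\R^4$ of central coordinates is what must be absorbed by conjugation.

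The genuine gap is the step you flag and then defer: the conjugation quotient. Conjugation by $g=[x,y,z]$ sends $[a,b,c]$ to $[a,b,c+xb-ya]$, so the orbit of $(c_1,c_2)$ is an affine subspace of dimension $\rank A$, full only when $\det A\neq0$. Conditions (a) and (b) do admit $\det A=0$, indeed $A=0$ (take $A=0$, $A'=-I$): there $(c_1,c_2)$ is a conjugation invariant, distinct values give distinct points of $\mathcal T(\Gamma,G\times G;G)$, and these points cannot be separated by saturated open sets because nearby generic orbits sweep out all of $\R^4$. So the degeneracy you worry about is not a technicality: on the loci $\det(V\pm I)=0$ extra, non-Hausdorff moduli genuinely survive the quotient. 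The paper itself disposes of this step in one sentence (``the conjugation action \dots is transitive on the $(1,3)$ entries''), which is correct only where $\det A$ and $\det A'$ are both nonzero; your instinct that this is where the real difficulty lies is therefore exactly right, but a complete proof must either treat these strata separately or explain why the asserted homeomorphism nevertheless holds, and neither your proposal nor the one-line argument it would replace does so.
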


\section{Notation.}\label{notation}
Let $G$ denote the Heisenberg group and $\Gamma=G\cap\GL_3\Z$ denote the discrete Heisenberg group. We will replace the matrix notation by defining
\[
\begin{bmatrix}
a\\b\\c
\end{bmatrix}:=
\begin{pmatrix}
1&a&c\\0&1&b\\0&0&1
\end{pmatrix}.
\]

We will fix a presentation $\Gamma=\langle\gamma_1,\gamma_2\rangle$, where
\begin{align}\label{generators}
\gamma_1=
\begin{bmatrix}
1\\0\\0
\end{bmatrix}
\quad\text{and}\quad
\gamma_2=
\begin{bmatrix}
0\\1\\0
\end{bmatrix}.
\end{align}

As a subgroup $\Gamma$ always acts properly discontinuously and freely on $G$ from the left and the quotient space $\Gamma\backslash G$ is a manifold. Similarly $\Gamma$ always acts properly discontinuously from the right with compact quotient $G/\Gamma$.

To let $\Gamma$ act both from the left and from the right, we rewrite $G$ as the homogeneous space $G\times G/\Delta G$, where $\Delta\colon G\to G\times G$ is the diagonal embedding. Then $\Gamma$ acts on $G\times G/\Delta G$ via homomorphisms $\Gamma\to G\times G$. We note here that $\Hom(\Gamma,G\times G)\cong(G\times G)\times(G\times G)$ as sets, because each generator $\gamma_1,\gamma_2$ can be assigned any element in $G\times G$, as any relations $\gamma_1$ and $\gamma_2$ satisfy as elements of $G$ are also satisfied by any two arbitrary elements in $G\times G$. Via the topology on $G$, then, $\Hom(\Gamma,G\times G)$ can be regarded a topological space. In particular, for $G$ being the Heisenberg group we have that $G\cong\R^3$, whence $\Hom(\Gamma,G\times G)\cong\R^{12}$.

Any homomorphism $\Gamma\to G\times G$ can be written as a pair of homomorphisms $\rho,\rho'\colon\Gamma\to G$. Now write $\Gamma_{\rho,\rho'}=\{(\rho(\gamma),\rho'(\gamma))\mid \gamma\in\Gamma\}$ for the image of the pair $(\rho,\rho')\colon\Gamma\to G\times G$. Then $\Gamma$ acts on $G\times G/\Delta G$ via $\Gamma_{\rho,\rho'}$ and the action of $\Gamma$ on $G$ as subgroup (on the left) is recovered as the action of $\Gamma_{\mathrm{id},\mathbf 1}$ on $G\times G/\Delta G$, where $\mathrm{id}$ is the inclusion and $\mathbf 1$ is the trivial homomorphism. However, for general $\rho,\rho'$ this action is not necessarily properly discontinuous.

\begin{rem}
Rewriting $G$ as $G\times G/\Delta G$ for $G=\widetilde{\SL_2\R}$ allowed Goldman \cite{goldman} to construct non-standard Lorentz space forms. Goldman's conjecture concerning the existence of an open neighbourhood of the embedding ${\rm id}\times{\mathbf 1}$, throughout which the group action remains properly discontinuous was resolved affirmatively for reductive Lie groups by Kobayashi \cite{kobayashi98}. An analogous result holds if $G$ is a simply connected Lie group and $\Gamma$ is a cocompact discrete group by an unpublished result of T.~Yoshino. Our results below show this feature explicitly for $G$ being the Heisenberg group.
\end{rem}

\section{Property (CI) and proper actions.}
To check for proper discontinuity of the action of $\Gamma_{\rho,\rho'}$, we will use a criterion by Nasrin \cite{nasrin01} for 2-step nilpotent groups, which relates properness to the {\em property} (CI).
\begin{defn}[\cite{kobayashi92a}, Def.~6]
We say the triplet $(L,H,G)$ has the property {\rm (CI)} if $L\cap gHg^{-1}$ is compact for any $g\in G$.
\end{defn}
(See \cite{lipsman} for the relationship between the property (CI) and proper actions in the more general context of locally compact topological groups acting on locally compact topological spaces.)
\begin{thm}[\cite{nasrin01}, Thm.~2.11]\label{thmnasrin}
Let $G$ be a simply connected 2-step nilpotent Lie group, and let $H$ and $L$ be connected subgroups. Then the following conditions are equivalent.
\begin{enumerate}
\item $L$ acts properly on $G/H$,
\item the triplet $(L,H,G)$ has the property {\rm (CI)},
\item $L\cap gHg^{-1}=\{e\}$ for any $g\in G$.
\end{enumerate}
\end{thm}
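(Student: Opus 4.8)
The plan is to pass to exponential coordinates, dispatch the formal implications, and then concentrate all the work on the one substantive direction. Since $G$ is simply connected and nilpotent, $\exp\colon\mathfrak g\to G$ is a diffeomorphism; I write $\mathfrak g=\Lie(G)$ and, because $G$ is $2$-step, record the Baker--Campbell--Hausdorff law $\exp X\cdot\exp Y=\exp\!\bigl(X+Y+\tfrac12[X,Y]\bigr)$ together with $\Ad(\exp Z)=\mathrm{id}+\ad Z$ (all higher brackets vanish, since $[\mathfrak g,[\mathfrak g,\mathfrak g]]=0$). By Malcev's theorem every connected subgroup is closed and simply connected, so $L=\exp\mathfrak l$ and $H=\exp\mathfrak h$ for subalgebras $\mathfrak l,\mathfrak h\subset\mathfrak g$. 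Because $\exp$ is bijective one has $\exp\mathfrak a\cap\exp\mathfrak b=\exp(\mathfrak a\cap\mathfrak b)$ for any subalgebras, so $L\cap gHg^{-1}=\exp\!\bigl(\mathfrak l\cap\Ad(g)\mathfrak h\bigr)$ and condition (3) becomes the purely linear statement $\mathfrak l\cap\Ad(g)\mathfrak h=\{0\}$ for all $g$. Finally I will use Kobayashi's properness criterion: $L$ acts properly on $G/H$ if and only if $L\cap SHS^{-1}$ is relatively compact for every compact $S\subset G$.

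The easy implications are then immediate. Taking $S=\{g\}$ in the criterion shows (1)$\Rightarrow$(2), and (3)$\Rightarrow$(2) holds because $\{e\}$ is compact. For (2)$\Rightarrow$(3) I use that a simply connected nilpotent Lie group, being diffeomorphic to $\R^{n}$ and torsion-free, contains no nontrivial compact subgroup; as $L\cap gHg^{-1}$ is a closed subgroup of $G$, compactness forces it to be trivial. This already gives (2)$\Leftrightarrow$(3) and reduces the theorem to the single implication (3)$\Rightarrow$(1).

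For that implication I would argue by contraposition. If the action is not proper there is a compact $S$ and a sequence $l_{n}\in L\cap SHS^{-1}$ with $l_{n}\to\infty$; writing $l_{n}=a_{n}h_{n}b_{n}^{-1}$ with $a_{n},b_{n}\in S$, $h_{n}\in H$, compactness of $S$ lets me pass to $a_{n}\to a$, $b_{n}\to b$, and then necessarily $h_{n}\to\infty$ in $H$. Choosing the complement $\mathfrak g=V_{1}\oplus V_{2}$ with $V_{2}=[\mathfrak g,\mathfrak g]$ central, the dilations $\delta_{t}=t\oplus t^{2}$ are Lie algebra automorphisms, and I would rescale $\log l_{n}$ and $\log h_{n}$ by the dilation matching their growth so as to extract, along a subsequence, nonzero limiting directions $\bar X$ and $\bar Y$. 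The additive $V_{1}$-parts of $\log l_{n}$ and $\log h_{n}$ differ only by the bounded quantities coming from $a_{n},b_{n}$ (the bracket corrections in BCH land in $V_{2}$), so these limiting directions share a common leading term. The goal is to assemble them into a single nonzero vector lying in $\mathfrak l\cap\Ad(g)\mathfrak h$ for $g$ built from the limits $a,b$, contradicting (3).

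The main obstacle is exactly this last assembly step: matching not only the $V_{1}$-components but also the central $V_{2}$-components of the two limiting directions. Conjugation acts on the center by $\Ad(\exp Z)Y=Y+[Z,Y]$ with $[Z,Y]\in V_{2}$, so the freedom to choose $g$ adjusts precisely the central part, and the $2$-step hypothesis is what makes the matching solvable degree by degree --- first the $V_{1}$-component, then a correction in the central $V_{2}$-component, with no feedback from higher brackets. I expect the delicate point to be verifying that the rescaled limits genuinely lie in $\mathfrak l$ and in $\Ad(g)\mathfrak h$ (rather than merely in their tangent cones) and that a single $g$ works simultaneously; this is where $2$-step nilpotency is indispensable and where the analogue is known to fail in higher step.
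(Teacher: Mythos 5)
The paper itself offers no proof of this statement---it is imported verbatim from Nasrin [N\,'01], Thm.~2.11---so your attempt can only be judged on its own merits. Your formal reductions are all correct: in a simply connected nilpotent group every connected subgroup is closed and equals $\exp$ of a subalgebra, so $L\cap gHg^{-1}=\exp(\mathfrak l\cap\Ad(g)\mathfrak h)$ and (3) is the linear condition $\mathfrak l\cap\Ad(g)\mathfrak h=\{0\}$; (2)$\Leftrightarrow$(3) follows because the only compact subgroup of a group diffeomorphic to $\R^n$ is trivial; and (1)$\Rightarrow$(2) is Kobayashi's criterion with $S=\{g\}$ plus closedness. But the entire content of the theorem is (3)$\Rightarrow$(1), and there you stop at a plan with an acknowledged hole at exactly the decisive step. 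The hole is genuine: a subalgebra $\mathfrak l$ is in general \emph{not} invariant under the anisotropic dilations $\delta_t=t\oplus t^2$ (in the Heisenberg algebra, $\delta_t$ sends $\mathrm{span}(X+Z)$ to $\mathrm{span}(X+tZ)$), so the rescaled limits land only in the asymptotic cones of $\mathfrak l$ and $\mathfrak h$, and the central $V_2$-components that your ``matching'' must control are precisely the data that the dilation limit discards. As written, no nonzero element of $\mathfrak l\cap\Ad(g)\mathfrak h$ is ever produced, so the contradiction with (3) is not reached.

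The repair is to avoid dilations altogether and exploit $2$-step nilpotency through BCH before rescaling. From $l_n=a_nh_nb_n^{-1}=(a_nb_n^{-1})(b_nh_nb_n^{-1})$ and the $2$-step identity $\log(uv)=\log u+\Ad\bigl(\exp(\tfrac12\log u)\bigr)\log v$ one gets $\log l_n=\log(a_nb_n^{-1})+\Ad(c_n)\log h_n$ with $c_n=\exp\bigl(\tfrac12\log(a_nb_n^{-1})\bigr)b_n$ confined to a fixed compact set. The first summand is bounded, so $r_n:=\|\log h_n\|\to\infty$; dividing by $r_n$ and passing to a subsequence with $c_n\to c$ and $\log h_n/r_n\to\bar Y$ in the unit sphere of the closed linear subspace $\mathfrak h$ gives $\log l_n/r_n\to\Ad(c)\bar Y$, which lies in $\mathfrak l$ because $\mathfrak l$ is a closed subspace stable under isotropic scaling. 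Hence $0\neq\Ad(c)\bar Y\in\mathfrak l\cap\Ad(c)\mathfrak h$, contradicting (3). Note that isotropic rescaling never leaves the subalgebras, and the $2$-step hypothesis enters only to package the unbounded bracket correction into a single $\Ad(c_n)$ with $c_n$ bounded; in step $\geq 3$ the BCH series contributes terms quadratic in $\log h_n$ that survive division by $r_n$, which is where this argument---and, at step $4$, the statement itself, by Yoshino's counterexample to the Lipsman conjecture---breaks down.
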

We will apply this theorem to the triple $(L_{\rho,\rho'},\Delta G,G\times G)$, where $G$ is again the Heisenberg group and $L_{\rho,\rho'}$ is the {\em extension} of $\Gamma_{\rho,\rho'}$ defined as follows.
\begin{defn}\label{extension}
Let $\Gamma$ be a discrete subgroup in a Lie group $G$. A connected subgroup $L\subset G$ is said to be the {\em extension} of $\Gamma$ if $L$ contains $\Gamma$ cocompactly.
\end{defn}
The following lemma will allow us to use Thm.~\ref{thmnasrin} to determine the conditions under which $\Gamma_{\rho,\rho'}$ acts properly discontinuously.
\begin{lem}[\cite{kobayashi89}]\label{properproperlydiscontinuous}
Let $L$ be a Lie group acting continuously on a locally compact space $X$. Let $\Gamma\subset L$ be a discrete subgroup such that $\Gamma\backslash L$ is compact. Then the following conditions are equivalent.
\begin{enumerate}
\item $\Gamma$ acts properly discontinuously on $X$,
\item $L$ acts properly on $X$.
\end{enumerate}
\end{lem}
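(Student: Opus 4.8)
The plan is to prove the two implications separately, observing that only the direction $(1)\Rightarrow(2)$ requires the cocompactness hypothesis. Throughout I will use the standard characterisations in this setting: the action of $L$ on $X$ is proper precisely when $L_K=\{g\in L\mid gK\cap K\neq\emptyset\}$ is compact for every compact $K\subset X$, and $\Gamma$ acts properly discontinuously precisely when $\Gamma_K=\{\gamma\in\Gamma\mid\gamma K\cap K\neq\emptyset\}$ is finite for every such $K$.

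The implication $(2)\Rightarrow(1)$ is immediate. If $L$ acts properly, then for compact $K$ the set $L_K$ is compact, and $\Gamma_K=\Gamma\cap L_K$ is a discrete subset of a compact subset of the Lie group $L$, hence finite. This direction uses neither cocompactness nor any hypothesis on $X$ beyond continuity of the action.

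For $(1)\Rightarrow(2)$ I would first extract from the compactness of $\Gamma\backslash L$ a compact set $C\subset L$ with $L=\Gamma C$: cover the compact quotient by images of relatively compact open sets under the open continuous projection $\pi\colon L\to\Gamma\backslash L$, pass to a finite subcover, and let $C$ be the closure of the corresponding union. Given a compact $K\subset X$, set $K'=K\cup CK$, which is compact since $CK$ is the image of the compact set $C\times K$ under the continuous action map. The key step is then the inclusion $L_K\subseteq\Gamma_{K'}\,C$: if $gK\cap K\neq\emptyset$, choose $x,y\in K$ with $gx=y$ and write $g=\gamma c$ with $\gamma\in\Gamma$, $c\in C$; since $cx\in CK\subseteq K'$ and $\gamma(cx)=y\in K\subseteq K'$, we get $\gamma\in\Gamma_{K'}$. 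By $(1)$ the set $\Gamma_{K'}$ is finite, so $\Gamma_{K'}C$ is a finite union of compact sets and hence compact.

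It remains to upgrade "$L_K$ lies in a compact set" to "$L_K$ is compact", and for this I would show $L_K$ is closed. The set $\{(g,x)\in L\times K\mid gx\in K\}$ is the preimage of the closed set $K$ (compact in the Hausdorff space $X$) under the continuous action map $L\times K\to X$, hence closed in $L\times K$; as $K$ is compact, the projection $L\times K\to L$ is a closed map, and the image of the above set under it is precisely $L_K$. Thus $L_K$ is a closed subset of the compact set $\Gamma_{K'}C$ and is therefore compact, so $L$ acts properly. The only real subtlety lies in this last closedness argument and in the bookkeeping of the decomposition $L=\Gamma C$; the remainder is a direct transfer of finiteness for $\Gamma$ into compactness for $L$ by means of a single compact thickening $K'$ of $K$.
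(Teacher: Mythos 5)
Your proof is correct, but there is nothing in the paper to compare it against: the lemma is stated with the citation \cite{kobayashi89} and no proof is given, since it is quoted as a known result of Kobayashi. Your argument is the standard proof of this equivalence and is essentially the one found in the cited reference: the easy direction is exactly $\Gamma_K=\Gamma\cap L_K$ (a closed discrete subset of a compact set, hence finite), and the substantive direction uses the cocompact decomposition $L=\Gamma C$, the thickening $K'=K\cup CK$, the inclusion $L_K\subseteq\Gamma_{K'}C$, and finally the closedness of $L_K$ obtained by projecting the closed set $\{(g,x)\in L\times K\mid gx\in K\}$ along the compact factor $K$. All of these steps check out, including the two points where care is genuinely needed: that $L_K$ being contained in a compact set does not by itself give compactness, which you address, and that the correct notion of proper discontinuity here is the compact-set formulation ($\Gamma_K$ finite for all compact $K$), which is the one used throughout this literature. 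The only hypotheses you invoke beyond the literal statement are that $X$ is Hausdorff (implicit in ``locally compact'' in this context, and needed for $K$ to be closed) and that a discrete subgroup of a Lie group is closed; both are standard.
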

\section{Main results.}
To find the extension of $\Gamma_{\rho,\rho'}$, we use the (global) diffeomorphism $\exp\colon\mathfrak g\to G$, whose inverse we denote by $\log$. Let $\rho,\rho'\colon\Gamma\to G$ be any two homomorphisms. Then $\rho$ and $\rho'$ are determined by their values on the generators, which (in the notation of \S\ref{notation}) we will set to be
\begin{align}\label{rho}
\rho(\gamma_i)=
\begin{bmatrix}
a_i\\b_i\\c_i
\end{bmatrix}
\quad\text{and}\quad
\rho'(\gamma_i)=
\begin{bmatrix}
a'_i\\b'_i\\c'_i
\end{bmatrix},
\end{align}
for $i=1,2$. Now, let $\rho_0\colon\mathfrak g\to\mathfrak g$ be a Lie algebra homomorphism defined on the generators by $\rho_0(\log\gamma_i)=\log\rho(\gamma_i)$, for $i=1,2$, and $\rho_0([\log\gamma_1,\log\gamma_2])=\log\rho([\gamma_1,\gamma_2])$, and extended linearly; let $\overline\rho\colon G\to G$ be defined by $\overline\rho=\exp\circ\rho_0\circ\log$. Then $\overline\rho\vert_\Gamma=\rho$, so that $\overline\rho$ extends $\rho$ in the sense that $\overline\rho$ is defined on all of $G$. If we write $\overline\rho'$ for the extension of $\rho'$ to all of $G$, then $L_{\rho,\rho'}=\{(\overline\rho(g),\overline\rho'(g))\mid g\in G\}$ is the extension of $\Gamma_{\rho,\rho'}$ in the sense of Def.~\ref{extension}.

Next, we will check condition (c) of Thm.~\ref{thmnasrin} for $(L_{\rho,\rho'},\Delta G,G\times G)$. We have that
\begin{align}
&L_{\rho,\rho'}\cap(g_1,g_2)\Delta G(g_1,g_2)^{-1}=\{e\}\notag\\
\Leftrightarrow\;&\overline\rho(g)=g_1^{-1}g_2\overline\rho'(g)(g_1^{-1}g_2)^{-1}\text{ only if $g=e$}\notag\\
\Leftrightarrow\;&\rho_0(\log g)=\mathrm{Ad}_{g_1^{-1}g_2}\rho'_0(\log g)\text{ only if $\log g=0$}\label{condition}
\end{align}
for all $(g_1,g_2)\in G\times G$. Now write 
\[
g=\begin{pmatrix}1&a&c\\0&1&b\\0&0&1\end{pmatrix}
\quad\text{and}\quad\log g=\begin{pmatrix}0&a&c-\frac12ab\\0&0&b\\0&0&0\end{pmatrix}.
\]

Calculating the LHS and RHS of (\ref{condition}) explicitly, it follows that (\ref{condition}) is equivalent to
\begin{align*}
&\begin{pmatrix}
a_1 & a_2 & 0\\
b_1 & b_2 & 0\\
\ast & \ast & a_1b_2-a_2b_1
\end{pmatrix}
\begin{pmatrix}
a\\b\\c
\end{pmatrix}\\=
&\begin{pmatrix}
a'_1 & a'_2 & 0\\
b'_1 & b'_2 & 0\\
\ast & \ast & a'_1b'_2-a'_2b'_1
\end{pmatrix}
\begin{pmatrix}
a\\b\\c
\end{pmatrix}\Rightarrow a=b=c=0.
\end{align*}
Writing
\begin{align}\label{a}
A=\begin{pmatrix}
a_1 & a_2\\b_1 & b_2
\end{pmatrix}
\quad\text{and}\quad
A'=\begin{pmatrix}
a'_1 & a'_2\\b'_1 & b'_2
\end{pmatrix}
\end{align}
we can rewrite condition (\ref{condition}) as
\[
\det\begin{pmatrix}
A-A' & 0\\ \ast & \det A-\det A'
\end{pmatrix}\neq 0,
\]
and we obtain the following proposition.
\begin{prop}\label{mainresult}
The group $\Gamma_{\rho,\rho'}$ acts properly discontinuously and cocompactly on $G\times G/\Delta G$ if and only if the following two conditions hold.
\begin{enumerate}
\item $\det(A-A')\neq0$, and
\item $\det A-\det A' \neq0$,
\end{enumerate}
where $A,A'$ are determined by $\rho,\rho'$ via (\ref{rho}) and (\ref{a}).
\end{prop}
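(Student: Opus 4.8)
The plan is to read the proposition off the chain of criteria already assembled and then to supply the one ingredient the (CI)-machinery does not hand us directly, namely cocompactness. First I would record that $G\times G$ is again a simply connected $2$-step nilpotent Lie group and that both $\Delta G$ and the extension $L_{\rho,\rho'}$ are connected subgroups, so Theorem~\ref{thmnasrin} applies to the triple $(L_{\rho,\rho'},\Delta G,G\times G)$. Its condition (c) unwinds, via the explicit computation of (\ref{condition}) carried out above and rewritten using (\ref{a}), to the requirement that the block lower-triangular matrix
\[
\begin{pmatrix} A-A' & 0\\ \ast & \det A-\det A'\end{pmatrix}
\]
have nonzero determinant. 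Since $L_{\rho,\rho'}$ contains $\Gamma_{\rho,\rho'}$ cocompactly by construction (Definition~\ref{extension}), Lemma~\ref{properproperlydiscontinuous} lets me pass freely between properness of $L_{\rho,\rho'}$ on $G\times G/\Delta G$ and proper discontinuity of $\Gamma_{\rho,\rho'}$. Hence proper discontinuity of $\Gamma_{\rho,\rho'}$ is equivalent, in both directions at once, to the nonvanishing of this determinant.

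Next I would simply expand the determinant. Being block lower-triangular, it equals $\det(A-A')\cdot(\det A-\det A')$, which is nonzero precisely when both factors are, that is, precisely when conditions (1) and (2) both hold. This settles the ``properly discontinuous'' clause of the equivalence.

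The main obstacle is cocompactness, and here I would argue by a dimension count followed by a transitivity argument. Condition (2) gives $\det A\neq\det A'$, so the two determinants cannot both vanish and at least one of $A,A'$ is invertible; consequently the Lie algebra homomorphism $X\mapsto(\rho_0(X),\rho_0'(X))$ has trivial kernel, since an invertible $A$ forces the image of $X$ in $\mathfrak g/[\mathfrak g,\mathfrak g]$ to vanish and then $\det A\neq0$ annihilates the central component. Therefore $L_{\rho,\rho'}$ has dimension $3=\dim(G\times G/\Delta G)$. By condition (c) the $L_{\rho,\rho'}$-action on the connected manifold $G\times G/\Delta G$ is free, and it is proper; an equidimensional free proper action has orbits that are simultaneously open (by the dimension equality) and closed (by properness), so there is a single orbit. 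Thus $G\times G/\Delta G\cong L_{\rho,\rho'}$ as an $L_{\rho,\rho'}$-space, and $\Gamma_{\rho,\rho'}\backslash(G\times G/\Delta G)\cong\Gamma_{\rho,\rho'}\backslash L_{\rho,\rho'}$ is compact by the extension property. Since this shows cocompactness is automatic whenever the action is properly discontinuous, no separate converse is needed. The delicate point is exactly the kernel computation: one must invoke condition (2), and not merely properness, to be certain that $L_{\rho,\rho'}$ is genuinely three-dimensional, so that the transitivity argument can be run.
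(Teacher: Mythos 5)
Your argument for the ``properly discontinuous'' half is exactly the paper's: Theorem~\ref{thmnasrin} applied to $(L_{\rho,\rho'},\Delta G,G\times G)$, the determinant computation preceding the proposition, and Lemma~\ref{properproperlydiscontinuous} to pass between $L_{\rho,\rho'}$ and $\Gamma_{\rho,\rho'}$. For cocompactness, however, you take a genuinely different and, as far as I can see, correct route. The paper uses Lemma~\ref{rhoinjective}: condition (b) forces at least one of $\rho,\rho'$ to be injective, hence $\cd\Gamma_{\rho,\rho'}=3$, and then it invokes the Poincar\'e-duality fact (\cite{kobayashi89}, Cor.~5.5) that a group of cohomological dimension equal to $\dim X$ acting properly discontinuously and freely on a contractible manifold $X$ does so cocompactly. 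You instead work with the syndetic hull: condition (b) guarantees one of $A,A'$ is invertible, which kills the kernel of $(\rho_0,\rho'_0)$ (the induced map on $\mathfrak g/[\mathfrak g,\mathfrak g]$ is $A$ and the central component scales by $\det A$), so $L_{\rho,\rho'}$ is $3$-dimensional; condition (c) of Theorem~\ref{thmnasrin} says precisely that $L_{\rho,\rho'}$ acts freely, so its orbits in the connected $3$-manifold $G\times G/\Delta G$ are open (equidimensional injective immersions) and closed (properness), hence there is a single orbit, and cocompactness reduces to compactness of $\Gamma_{\rho,\rho'}\backslash L_{\rho,\rho'}$, which is the defining property of the extension. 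Your version is more elementary --- no group cohomology, and no need for Lemma~\ref{rhoinjective} at this point (though that lemma is still needed elsewhere, to verify the injectivity requirement in the definition of $R(\Gamma,G\times G;G)$) --- and it yields the extra geometric fact that $G\times G/\Delta G$ is an $L_{\rho,\rho'}$-torsor; the paper's version is the standard general-purpose machinery. Both proofs correctly exploit the observation that cocompactness is automatic once (a) and (b) hold, so no separate converse for that clause is required.
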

Proper discontinuity is contained in the above argument. For cocompactness we make use of the following lemma.
\begin{lem}\label{rhoinjective}
Let $\rho$ be as in (\ref{rho}) and $A$ be defined by (\ref{a}). Then $\det A\neq0\Leftrightarrow\rho$ is injective.
\end{lem}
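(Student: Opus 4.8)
The plan is to exploit the central role of the commutator $z=[\gamma_1,\gamma_2]$, which generates the center of $\Gamma$. First I would record the relevant structure of $\Gamma$: every element has a unique normal form $\gamma_1^m\gamma_2^n z^k$ with $m,n,k\in\Z$, where $z$ is central and of infinite order; indeed $z=\begin{bmatrix}0\\0\\1\end{bmatrix}\neq e$. A direct matrix multiplication in $G$ shows that the commutator of $\begin{bmatrix}a\\b\\c\end{bmatrix}$ and $\begin{bmatrix}a'\\b'\\c'\end{bmatrix}$ is the central element $\begin{bmatrix}0\\0\\ab'-a'b\end{bmatrix}$. Applying this to $\rho(\gamma_1)$ and $\rho(\gamma_2)$ and using that $\rho$ is a homomorphism yields the key identity
\[
\rho(z)=[\rho(\gamma_1),\rho(\gamma_2)]=\begin{bmatrix}0\\0\\\det A\end{bmatrix}.
\]

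For the implication ``$\rho$ injective $\Rightarrow\det A\neq0$'' I would argue by contraposition: if $\det A=0$, the identity above gives $\rho(z)=e$ while $z\neq e$, so $z\in\Ker\rho$ and $\rho$ is not injective.

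For the converse, assume $\det A\neq0$ and suppose $\rho(\gamma_1^m\gamma_2^n z^k)=e$. The projection $G\to\R^2$ onto the $(a,b)$-coordinates is a group homomorphism (it realizes the abelianization of $G$), under which $\rho(z)$ is trivial; hence the $(a,b)$-part of $\rho(\gamma_1^m\gamma_2^n z^k)$ equals $m(a_1,b_1)^{\mathrm T}+n(a_2,b_2)^{\mathrm T}=A\begin{pmatrix}m\\n\end{pmatrix}$. Setting this equal to zero and using the invertibility of $A$ forces $m=n=0$. The remaining element is $z^k$, with $\rho(z^k)=\begin{bmatrix}0\\0\\k\det A\end{bmatrix}$, so $k\det A=0$ and therefore $k=0$. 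Thus $\Ker\rho$ is trivial and $\rho$ is injective.

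I expect no serious obstacle here; the only point requiring care is the bookkeeping in the Heisenberg multiplication used to extract the commutator and to confirm that the $(a,b)$-coordinate projection is a homomorphism annihilating the center. Everything then reduces to the single observation that injectivity of $\rho$ on the central generator $z$ is controlled precisely by the non-vanishing of $\det A$.
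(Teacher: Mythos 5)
Your proof is correct, but it takes a genuinely different and more elementary route than the paper. You work with the explicit normal form $\gamma_1^m\gamma_2^n z^k$ of elements of $\Gamma$, compute $\rho(z)=[\rho(\gamma_1),\rho(\gamma_2)]=\left[\begin{smallmatrix}0\\0\\\det A\end{smallmatrix}\right]$ directly, and then kill a putative kernel element in two steps: the abelianization $G\to\R^2$ forces $A\left(\begin{smallmatrix}m\\n\end{smallmatrix}\right)=0$, hence $m=n=0$ by invertibility, and the residual central part gives $k\det A=0$, hence $k=0$. All the computations check out (the commutator formula, the normal form, and the fact that the $(a,b)$-projection is a homomorphism annihilating the centre are each routine verifications). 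The paper instead isolates the structural statement that a homomorphism $\rho\colon\Gamma\to G$ is injective if and only if its image is non-commutative, and proves the nontrivial direction by a nine-lemma diagram chase: it analyses $N\cap Z(\Gamma)$ for $N=\ker\rho$, rules out the cases $N\cap Z(\Gamma)=Z(\Gamma)$ and $N\cap Z(\Gamma)=m\Z$ (the latter using torsion-freeness of $G$), and then shows $N$ must lie in the centre and hence be trivial. The paper's argument buys a coordinate-free, more general fact about quotients of the discrete Heisenberg group mapping to torsion-free groups; your argument buys brevity and transparency, trading the homological machinery for the explicit group law. Either is a complete proof of the lemma.
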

\begin{proof}
$\det A$ is precisely the (1,3) entry of the commutator $[\rho(\gamma_1),\rho(\gamma_2)]$ and $\det A\neq0$ if and only if the image $\rho(\Gamma)$ is non-commutative. We show that $\rho(\Gamma)$ being non-commutative is equivalent to $\rho$ being injective.

If $\rho$ is injective, $\rho(\Gamma)\cong\Gamma$ is non-commutative. Conversely, write $N=\ker\rho$ and assume that $\rho(\Gamma)$ is non-commutative. We have the commutative diagram
\begin{align*}
\begin{tikzpicture}[baseline=-2.3pt,description/.style={fill=white,inner sep=2pt}]
\matrix (m) [matrix of math nodes, row sep=2.85em,
column sep=1em, inner sep=4pt, text height=1.5ex, text depth=0.25ex, ampersand replacement=\&]
{
	\& 0	\&	0	\& 0	\& 	\\
0	\& N\cap\Z	\&	N	\& N/N\cap\Z	\& 0	\\
0	\& \Z	\&	\Gamma	\& \Z^2	\& 0	\\
0	\& \Z/N\cap\Z	\&	\Gamma/N	\& \Gamma/\Z N	\& 0	\\
	\& 0	\&	0	\& 0	\& 	\\
};
\draw[-stealth,font=\scriptsize]
(m-1-2) edge node[auto] {} (m-2-2)
(m-1-3) edge node[auto] {} (m-2-3)
(m-1-4) edge node[auto] {} (m-2-4)
(m-2-1) edge node[auto] {} (m-2-2)
(m-2-2) edge node[auto] {} (m-2-3)
(m-2-3) edge node[auto] {} (m-2-4)
(m-2-4) edge node[auto] {} (m-2-5)
(m-2-2) edge node[auto] {} (m-3-2)
(m-2-3) edge node[auto] {} (m-3-3)
(m-2-4) edge node[auto] {} (m-3-4)
(m-3-1) edge node[auto] {} (m-3-2)
(m-3-2) edge node[auto] {} (m-3-3)
(m-3-3) edge node[auto] {} (m-3-4)
(m-3-4) edge node[auto] {} (m-3-5)
(m-3-2) edge node[auto] {} (m-4-2)
(m-3-3) edge node[auto] {} (m-4-3)
(m-3-4) edge node[auto] {} (m-4-4)
(m-4-1) edge node[auto] {} (m-4-2)
(m-4-2) edge node[auto] {} (m-4-3)
(m-4-3) edge node[auto] {} (m-4-4)
(m-4-4) edge node[auto] {} (m-4-5)
(m-4-2) edge node[auto] {} (m-5-2)
(m-4-3) edge node[auto] {} (m-5-3)
(m-4-4) edge node[auto] {} (m-5-4);
\end{tikzpicture}
\end{align*}
whose rows and columns are exact by the nine lemma. Turning our attention to the first column, the top left entry $N\cap\Z$ can be considered as a subgroup of $\Z$, and is thus equal to (\emph{i}) $0$, (\emph{ii}) $\Z$, or (\emph{iii}) $m\Z$, for some $m\geq2$.

{\bf Case {\rm (\emph{ii})}.}\;
If $N\cap\Z=\Z$, $N$ contains the commutator $\Z=[\Gamma,\Gamma]$, contradicting the fact that $\rho(\Gamma)\cong\Gamma/N$ was assumed non-commutative.

{\bf Case {\rm (\emph{iii})}.}\;
If $N\cap\Z=m\Z$, for $m\geq2$, then $\Z/N\cap\Z=\Z_m$ in the bottom left entry. However, $\Z_m$ is finite and contains torsion elements and injects into $\Gamma/N$. By the first isomorphism theorem for groups, the induced map $\rho_\ast\colon\Gamma/N\to G$ is injective. But $G$ is torsion-free, whence $\Gamma/N$ is torsion-free also and we obtain a contradiction.

We conclude that $N\cap\Z=0$ (case {\rm (\emph{i})}).

Now, write $\pi\colon\Gamma\to\Z^2$ for the projection and $\pi^\ast\colon N\to N/N\cap\Z$ for the restriction of $\pi$ to $N$. Let $\gamma$ be any element in $\Gamma$ and $n\in N$. Since $N$ is normal, $\gamma n\gamma^{-1}\in N$. Then 
\[
\pi^\ast(\gamma n\gamma^{-1})=\pi^\ast(\gamma)\pi^\ast(n)\pi^\ast(\gamma^{-1})=\pi^\ast(n),
\]
where the last equality follows from the fact that $\mathrm{im}\,\pi^\ast$ injects into $\Z^2$ and is therefore commutative. Since $N\cap\Z=0$, $\pi^\ast$ is an isomorphism and we conclude that $\gamma n=n\gamma$, i.e.~$N$ is contained in the centraliser $\Z$. Then $N\cap\Z=0$ shows that $N$ is trivial, whence $\rho$ is injective.
\end{proof}
\begin{proof}[Proof of Prop.~\ref{mainresult}.]
Using Thm.~\ref{thmnasrin}, we have shown that $L_{\rho,\rho'}$ acts properly on $G\times G/\Delta G$ if and only if conditions (a) and (b) hold. Applying Lem.~\ref{properproperlydiscontinuous}, $L_{\rho,\rho'}$ acts properly on $G\times G/\Delta G$ if and only if $\Gamma_{\rho,\rho'}$ acts properly discontinuously on $G\times G/\Delta G$.

By Lem.~\ref{rhoinjective}, condition (b) shows that at least one of $\rho,\rho'$ must be injective, whence the cohomological dimension $\cd\Gamma_{\rho,\rho'}=3$. It is a fact, based on a standard argument invoking Poincar\'e duality, that if a group $\Gamma$ acts (faithfully) on a contractible manifold $X$ and $\cd\Gamma=\dim X$, then $\Gamma\backslash X$ is compact (cf.~\cite{kobayashi89}, Cor.~5.5). Since $G\times G/\Delta G\cong\R^3$ is indeed contractible and $\dim G\times G/\Delta G=\cd\Gamma_{\rho,\rho'}=3$, the double quotient $\Gamma_{\rho,\rho'}\backslash G\times G/\Delta G$ is compact.
\end{proof}

Prop.~\ref{mainresult} can be turned into a method for determining pairs of homomorphisms for which $\Gamma_{\rho,\rho'}$ acts properly discontinuously and cocompactly on $G$ from the left and right as follows.

Let
\begin{align*}
S=\begin{pmatrix}s_0 & s_1\\s_2 & s_3\end{pmatrix}&\in\GL_2\R\\
\text{and }(S,t_0,t_1,t_2,t_3,c_1,c_2,c_1',c_2')&\in\GL_2\R\times\R^\times\times\R^7.
\end{align*}
Define a map
\begin{align}\label{alpha}
\alpha\colon\GL_2\R\times\R^\times\times\R^7&\to R(\Gamma,G\times G;G)\\
(S,t_0,t_1,t_2,t_3,c_1,c_2,c_1',c_2')&\mapsto\phi,\notag
\end{align}
where $\phi=(\rho,\rho')$ is defined by
\begin{align*}
\rho(\gamma_1)&=
\begin{bmatrix}
\tfrac12(s_0(t_0+t_3)+s_0+s_1t_2)\\
\tfrac12(s_2(t_0+t_3)+s_2+s_3t_2)\\
c_1
\end{bmatrix}\\
\rho(\gamma_2)&=
\begin{bmatrix}
\tfrac12(s_1(t_0-t_3)+s_1+s_0t_1)\\
\tfrac12(s_3(t_0-t_3)+s_3+s_2t_1)\\
c_2
\end{bmatrix}\\
\rho'(\gamma_1)&=
\begin{bmatrix}
\tfrac12(s_0(t_0+t_3)-s_0+s_1t_2)\\
\tfrac12(s_2(t_0+t_3)-s_2+s_3t_2)\\
c'_1
\end{bmatrix}\\
\rho'(\gamma_2)&=
\begin{bmatrix}
\tfrac12(s_1(t_0-t_3)-s_1+s_0t_1)\\
\tfrac12(s_3(t_0-t_3)-s_3+s_2t_1)\\
c'_2
\end{bmatrix}.
\end{align*}
Determining $A,A'$ via (\ref{a}), one checks that $A-A'=S$ and $\det A-\det A'=t_0\cdot\det S\neq0$, as $t_0\in\R^\times$. Thus, conditions (a) and (b) from Prop.~\ref{mainresult} are satisfied and $\Gamma_{\rho,\rho'}$ acts properly discontinuously and cocompactly on $G$ from the left and right. Moreover, we have the following
\begin{thm}
The map $\alpha$ (see (\ref{alpha})) induces a homeomorphism from $\GL_2\R\times\R^\times\times\R^7$ onto the parameter space $R(\Gamma,G\times G;G)$ of deformations of $\Gamma$ acting properly discontinuously on the group manifold $G$ from the left and right. Furthermore, the deformation space $\mathcal T(\Gamma,G\times G;G)$ is homeomorphic to $\GL_2\R\times\R^\times\times \R^3$.
\end{thm}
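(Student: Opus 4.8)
The plan is to prove the two assertions in turn: that $\alpha$ is a homeomorphism onto $R(\Gamma,G\times G;G)$, and that passing to $G\times G$-orbits collapses exactly the center directions, leaving $(S,t_0,\dots,t_3)$. For the first assertion I would begin by identifying $R(\Gamma,G\times G;G)$, as a subspace of $\Hom(\Gamma,G\times G)\cong\R^{12}$, with the open set cut out by conditions (a) and (b) of Prop~\ref{mainresult}: one inclusion is the computation preceding the theorem together with the remarks that (b) forces at least one of $\rho,\rho'$ to be injective by Lem~\ref{rhoinjective} (whence $\phi=(\rho,\rho')$ is injective) and that a short conjugacy computation in $G$ gives freeness, while the reverse inclusion consumes Prop~\ref{mainresult}. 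Granting this, I would write down an explicit continuous inverse. Given $\phi$, read off $A,A'$ from (\ref{a}); then (a) gives $S:=A-A'\in\GL_2\R$, and since $A+A'=ST$ with $T=\begin{pmatrix}t_0+t_3 & t_1\\ t_2 & t_0-t_3\end{pmatrix}$ in the parametrisation (\ref{alpha}), one recovers $T=S^{-1}(A+A')$ and hence $t_1=T_{12}$, $t_2=T_{21}$, $t_0=\tfrac12\tr T$, $t_3=\tfrac12(T_{11}-T_{22})$, where $t_0\neq0$ because $\det A-\det A'=t_0\det S\neq0$ by (b); the remaining $c_1,c_2,c_1',c_2'$ are literally the center entries of $\rho(\gamma_i),\rho'(\gamma_i)$. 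Every step is polynomial in the entries of $\phi$ save the single inversion $S\mapsto S^{-1}$, continuous on $\GL_2\R$, so the inverse is continuous and $\alpha$ is a homeomorphism.

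For the deformation space I would compute the conjugation action of $G\times G$ in these coordinates. The key local fact is that conjugating $\begin{bmatrix}a\\b\\c\end{bmatrix}$ by $\begin{bmatrix}p\\q\\r\end{bmatrix}$ returns $\begin{bmatrix}a\\b\\c+pb-qa\end{bmatrix}$, so conjugation fixes the $(a,b)$-entries and translates only the center entry. Consequently conjugation by $(h_1,h_2)$ with $h_j=\begin{bmatrix}p_j\\q_j\\r_j\end{bmatrix}$ leaves $A,A'$ — hence $S$, $T$ and all of $t_0,\dots,t_3$ — invariant, while acting on the center coordinates by the translations $(c_1,c_2)\mapsto(c_1+p_1b_1-q_1a_1,\,c_2+p_1b_2-q_1a_2)$ and $(c_1',c_2')\mapsto(c_1'+p_2b_1'-q_2a_1',\,c_2'+p_2b_2'-q_2a_2')$. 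Thus $(S,t_0,\dots,t_3)$ descends to $\mathcal T$, and the entire remaining content is to understand the quotient of the four-dimensional center fibre by these translations.

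Here lies the main obstacle. The linear map $(p_1,q_1)\mapsto(p_1b_1-q_1a_1,\,p_1b_2-q_1a_2)$ has determinant exactly $\det A$, and its primed analogue has determinant $\det A'$, so by Lem~\ref{rhoinjective} each pair of center coordinates can be swept to $0$ precisely when the corresponding representation is injective. In the clean case $\det A\neq0\neq\det A'$ both translations are surjective, every orbit meets the slice $c_1=c_2=c_1'=c_2'=0$ exactly once, and $[\phi]\mapsto(S,t_0,t_1,t_2,t_3)$ is a continuous bijection onto $\GL_2\R\times\R^\times\times\R^3$ whose inverse is $\alpha$ restricted to $c=0$ followed by the quotient map; this already yields the homeomorphism over an open dense locus. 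The crux is the boundary locus allowed by (b) on which exactly one of $\det A,\det A'$ vanishes, where the corresponding translation is no longer surjective and the slice $c=0$ fails to be transverse. I would first check whether the freeness clause in the definition of $R$ secretly excludes this locus — i.e. whether non-injectivity of one factor forces a fixed point — and, failing that, control the quotient topology there directly, verifying that the invariant map onto $\GL_2\R\times\R^\times\times\R^3$ and its inverse stay continuous and that the quotient remains Hausdorff, using that (a) keeps $S$ invertible while (b) keeps $t_0\neq0$. This last step, reconciling the jump in orbit dimension along $\{\det A=0\}\cup\{\det A'=0\}$ with the asserted manifold structure, is the part I expect to require the most care.
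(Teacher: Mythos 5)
Your first half is, up to notation, exactly the paper's own argument. The paper introduces $\omega\colon(A,A')\mapsto(U,V)=(A-A',(A-A')^{-1}(A+A'))$ with inverse $\alpha_0\colon(U,V)\mapsto(\tfrac12(UV+U),\tfrac12(UV-U))$, translates condition (a) into $U\in\GL_2\R$ and condition (b) into $\tr V\neq0$, and identifies $\{V\mid\tr V\neq0\}\cong\R^\times\times\R^3$; your $S$ and $T$ are precisely $U$ and $V$, and your explicit continuous inverse (recovering $t_0=\tfrac12\tr T$, etc., with the only non-polynomial step being $S\mapsto S^{-1}$) is the map $\omega$ together with that identification. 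Your preliminary step --- checking that the locus cut out by (a) and (b) really coincides with $R(\Gamma,G\times G;G)$, i.e.\ that injectivity of $\phi$ and freeness come along for free from the trivial-intersection condition --- is carried out in more detail than in the paper and is correct.

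For the second half you have located a genuine problem rather than manufactured one, and your proposal does not close it --- but neither does the paper. Your computation that conjugating by $h=[p,q,r]$ adds $pb-qa$ to the central coordinate is correct, as is the observation that the resulting translation of $(c_1,c_2)$ has determinant $\det A$ (and $\det A'$ for the primed pair), so the conjugation action is transitive on the $(1,3)$ entries only where $\det A\neq0\neq\det A'$. Condition (b) only forces \emph{one} of the two determinants to be nonzero, and the locus where the other vanishes is genuinely inside $R$: for instance $A=0$, $A'\in\GL_2\R$ satisfies (a) and (b), and there $\rho(\gamma_1),\rho(\gamma_2)$ are central, hence fixed by conjugation, so $(c_1,c_2)$ descends to a two-parameter invariant of the orbit. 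The fibres of $[\phi]\mapsto(S,t_0,\dots,t_3)$ are therefore not single points along $\{\det A=0\}\cup\{\det A'=0\}$, and the saturation argument you sketch shows the quotient is not even Hausdorff there (orbits over $\det A\neq0$ fill the whole central $\R^4$-fibre and accumulate on every orbit over $A=0$). So the step you defer as ``requiring the most care'' cannot be pushed through as stated; the paper's one-sentence justification (``the conjugation action \dots is transitive on the $(1,3)$ entries'') silently assumes both determinants are nonzero and does not address this stratum. What your argument does establish cleanly is the homeomorphism over the open dense locus $\det A\neq0\neq\det A'$ via the slice $c_1=c_2=c_1'=c_2'=0$; to go further one would have to either show that the definition of $R$ excludes the degenerate stratum (it does not appear to) or weaken the statement of the theorem.
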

\begin{proof}
The idea of the proof and the origin of the map $\alpha$ is the following.

The space of pairs of matrices satisfying (a) and (b) of Prop.~\ref{mainresult} can be determined as follows. Suppose $A,A'$ satisfy (a) and (b). Consider the map
\begin{align*}
\omega\colon(A,A')\mapsto(U,V)=(A-A',(A-A')^{-1}(A+A')),
\end{align*}
which is well-defined, since $U=A-A'$ is invertible. We can find an inverse mapping
\[
\alpha_0\colon(U,V)\mapsto(\tfrac12(UV+U),\tfrac12(UV-U))
\]
and one checks that $\alpha_0\circ\omega=\mathrm{id}$ and $\omega\circ\alpha_0=\mathrm{id}$.

For $U$ and $V$, condition (a) is equivalent to the condition that $U\in\GL_2\R$; condition (b) translates into the condition
\begin{align*}
\det\tfrac12(UV+U)&\neq\det\tfrac12(UV-U)\\
\Leftrightarrow\hspace{30pt}\det (V+I)&\neq\det(V-I),\label{determinanttrace}\\
\Leftrightarrow\hspace{23.7pt}\det V+\tr V&\neq\det V-\tr V\\
\Leftrightarrow\hspace{59.6pt}\tr V&\neq0,
\end{align*}
where $I$ denotes the $2\times2$ identity matrix. Then, writing $M=\{V\in M_2(\R)\mid\tr V\neq 0\}\cong\R^\times\times\R^3$, the map
\[
\alpha_0\colon\GL_2\R\times M\to\{(A,A')\mid A,A'\text{ satify (a) \& (b)}\}
\]
is a homeomorphism. Writing $\mathrm{id}$ for the identity on $\R^4=\{(c_1,c_2,c_1',c_2')|c_1,c_2,c_1',c_2'\in\R\}$,  $\alpha_0\times\mathrm{id}=\alpha$ is the homeomorphism
\[
\alpha\colon\GL_2\R\times\R^\times\times\R^7\to R(\Gamma,G\times G;G)
\]
up to the identification $M\cong\R^\times\times\R^3$.

The conjugation action of $G\times G$ on $\Gamma_{\rho,\rho'}$ leaves the superdiagonal entries of each factor unchanged and is transitive on the $(1,3)$ entries, so that $\mathcal T(\Gamma,G\times G;G)=R(\Gamma,G\times G;G)/(G\times G)$ is homeomorphic to
\begin{align*}
\GL_2\R\times\R^\times\times\R^3.
\end{align*}
\end{proof}
\section{Geometric interpretation of main result.}
Geometrically speaking, we have the central extensions
\begin{align*}
\begin{tikzpicture}[baseline=-2.3pt,description/.style={fill=white,inner sep=2pt}]
\matrix (m) [matrix of math nodes, row sep=0.2em,
column sep=3ex, inner sep=2pt, text height=1.5ex, text depth=0.25ex, ampersand replacement=\&]
{0 \& \R \& G \& \R^2 \& 0\\
 0 \& \Z \& \Gamma \& \Z^2 \& 0\\};
\path[-stealth,font=\scriptsize]
(m-1-1) edge (m-1-2)
(m-1-2) edge (m-1-3)
(m-1-3) edge (m-1-4)
(m-1-4) edge (m-1-5)
(m-2-1) edge (m-2-2)
(m-2-2) edge (m-2-3)
(m-2-3) edge (m-2-4)
(m-2-4) edge (m-2-5);
\end{tikzpicture}
\end{align*}
and by quotienting $\Gamma\backslash G$ can be viewed as a circle bundle over the torus. The two conditions of Prop.~\ref{mainresult} can then be interpreted as follows. The matrix $A-A'$ determines a Riemannian structure on this torus and $\det A-\det A'$ determines the structure on (i.e.~length of) the circle. In particular, the number of connected components (which equals four) of the deformation space $\mathcal T(\Gamma,G\times G;G)$ corresponds to the number of possible combinations of orientations on the torus and the circle.

\begin{example}\label{notgraph}
Let
\[
\rho(\gamma_1)=
\begin{bmatrix}
2\\
c\\
0
\end{bmatrix},\qquad
\rho(\gamma_2)=
\begin{bmatrix}
1\\
2\\
0
\end{bmatrix}
\]
and
\[
\rho'(\gamma_1)=
\begin{bmatrix}
1\\
c\\
0
\end{bmatrix},\qquad
\rho'(\gamma_2)=
\begin{bmatrix}
0\\
1\\
0
\end{bmatrix}.
\]
Letting $c$ vary from $0$ to $1$, we obtain a family of groups $\Gamma_{\rho,\rho'}$ (which lies in the component of both base space and fibre orientations being positive), which by Prop.~\ref{mainresult} act cocompactly and properly discontinuously on $G\times G/\Delta G$, where the length of the fibre varies from $3$ to $2$ and the structure on the torus remains unchanged and is given by the matrix
$\left(\begin{smallmatrix}
1 & 1\\0 & 1
\end{smallmatrix}\right)$.
\end{example}

Similarly, it is possible to find families of groups, which only change the structure on the base space, leaving the length of the fibre unchanged; or families, for which both the structure on the base space and the length of the fibre are fixed, but the connection form is deformed.

\begin{rem}
General examples, like the one above, stand in contrast to the case when $G$ is semisimple of real rank $1$---e.g.~$G=\SL_2\R$, $\SO(n,1)$, $\mathrm{SU}(n,1)$, $\mathrm{Sp}(n,1)$---for which any properly discontinuous group for $G\times G/\Delta G$ is a graph up to a finite-index subgroup (\cite{kobayashi93}, Thm.~2 and Rmk.~1).
\end{rem}

\section*{Acknowledgements.}
The author would like to thank Prof.~Taro Yoshino for detailed comments on an earlier version of this paper and Prof.~Toshiyuki Kobayashi for his comments, guidance and invaluable advice.

\end{document}